\def\lbr{\left\{}
\def\rbr{\right\}}
\def\mso{{\mathscr O}}
\def\me{{\mathbb  E}}
\def\mr{{\mathbb  R}}
\def\mp{{\mathbb  P}}
\def\mi{{\mathbb I}}
\DeclareMathOperator{\str}{str}
\DeclareMathOperator{\Tr}{Tr}
\DeclareMathOperator{\Pf}{Pf}
\newcommand{\rf}[1]{(\ref{#1})}
\def\lnfrac#1#2{\raise.7ex \hbox{\Small $#1$}%%
\kern-.15em/\kern-.15em  \lower.2ex \hbox{\Small $#2$}}
\theoremstyle{plain}
\newtheorem{theorem}{Theorem}[section]
\newtheorem{lemma}[theorem]{Lemma}
\theoremstyle{definition}
\newtheorem{remark}[theorem]{Remark}
\numberwithin{equation}{section}
\begin{document}

\title[Reflecting Brownian Motion and the Gauss-Bonnet-Chern Theorem]{Reflecting Brownian Motion and the Gauss-Bonnet-Chern Theorem}
\vskip 1cm

\author{Weitao Du}
\address{School of Mathematics\\
University of Science and Technology of China\\
Hefei, Anhui Province, China
}

\author{Elton P. Hsu}
\address{Department of Mathematics\\
        Northwestern University \\ Evanston, IL 60208, USA}
\email{ehsu\@@math.northwestern.edu}
%\thanks{The research of the first author on this work was supported in part by the NSF grant
%DMS-0407819.}

\email{weitao.du\@@ustc.edu.cn}

\subjclass[2010]{Primary-60J65; secondary-58J65}

\keywords{manifold with boundary, Gauss-Bonnet-Chern theorem, reflecting Brownian motion}

\begin{abstract} We use reflecting Brownian motion (RBM) to prove the well known Gauss-Bonnet-Chern theorem for a compact Riemannian manifold with boundary. The boundary integrand is obtained by carefully analyzing the asymptotic behavior of the boundary local time of RBM for small times. 
\end{abstract}
\maketitle
%\tableofcontents

\section{Introduction and outline of the approach}
Let $M$ be a smooth oriented manifold with boundary.  Its Euler characteristic $\chi(M)$ is defined by 
$$\chi (M) = \sum_{i=0}^{m} (-1)^i b_i,$$
where $b_i = \text{dim}\, H^i(M)$ are the Betti numbers.  These numbers as well as the Euler characteristic $\chi(M)$ are topological invariants of the smooth 
manifold $M$.   Assume that we equip $M$ with a Riemannian metric. A fundamental result in differential geometry is the Gauss-Bonnet-Chern theorem (Chern~\cite{Chern}), which expresses 
the Euler characteristic as the sum of two integrals on $M$ and on its boundary $\partial M$
$$\chi(M) = \int_M e_M(x)\, dx + \int_{\partial M} e_{\partial M}(\overline x)\, d\overline x, $$
where $e_M$ and $e_{\partial M}$ are local gometric invariants determined by the curvature tensor and the second fundamental form of the boundary of the 
manifold.  The interior integrand can be described easily. When the dimension of the manifold $d$ is even, the interior integrand $e_M = \Pf (\Omega)/(4\pi)^{d/2} (d/2)!$, which is the density of the Euler class of the manifold. When $d$ is odd,  we have $e_M = 0$. 

By the de Rham theorem (see Duff~\cite{Duff}),  the topological cohomology group $H^*(M)$ can be identified with the de Rham cohomology group on the bundle of differential forms satisfying the absolute 
boundary condition.  For a manifold without boundary, McKean and Singer~\cite{McKeanS} expressed the integral of the Euler form on the manifold in terms of the supertrace of the heat kernel of the 
Kodaira-de Rham Laplacian  on the bundle of differential forms 
\begin{equation}
\chi(M) = \int_M \str p^*(t, x,x)\, dx.\label{mckeansinger}
\end{equation}
They conjectued a ``fantastic cancellation'',  namely the supertrace of the heat kernel has a pointwise limit as $t\downarrow0$.  This conjecture was proved by Patodi~\cite{Patodi}.
Getzler~\cite{Getzler} gave a simple analytic proof of the general Atiyah-Singer index theorem.  For details of this approach to the Gauss-Bonnet-Chern theorem see 
Berline et al ~\cite{BerlineGV}  and Yu~\cite{Yu}.  Another independent approach was developed by 
Gilkey~\cite{Gilkey} \cite{Gilkey1}  based on his geometric invarance theory, in which the integrands on $M$ as well as the boundary $\partial M$ are identified by the invariance properties 
they must possess. 

The probabilistic approach to analytic index theorems was initiated by Bismut~\cite{Bismut} for Riemannian manifolds without boundary. The crucial observation of this approach is the heat 
kernel on differentiable forms associated with the Hodge-de Rham Laplacian can be expressed explicitly in terms of a multiplicative Feynman-Kac functional of the horizontal 
Brownian motion on the frame bundle of the Riemannian 
manifold $M$. It then becomes a purely algebraic fact that the ``fantastic cancellation'' occurs for the supertrace of this multiplicative Feynman-Kac functional at the pathwise level with the 
result that the leading term is precisely the Euler form, thus resulting in a so-called stochastic local index theorem.  Later Bismut's approach was geatly simplified by Hsu~\cite{Hsu}.  
See also Hsu~\cite{Hsu1} for a more detailed exposition of this approach to the local index theorem for the Kodaira-Beltrami Laplacian (Gauss-Bonnet-Chern index theorem) as well as for the 
Dirac operator for spin manifolds.

Bismut's approach and Hsu's simplification cannot be easily carried over to a manifold with boundary.  The major difficulty lies in the fact that the multiplicative functional is discontinuous at the 
boundary.  Another difficulty is the presence of the boundary local time, with the result that the ``fantastic cancellation'' does not occur at the path level; more precisely, it occurs completely in 
the interior of the manifold and does so only partially at the boundary.  Such difficulties should be expected, for in the limit the supertrace str$p_M(t,x,x)$ in \rf{mckeansinger} 
must have a singularity of a different order near the boundary. This singularity will eventually produce the integral on the boundary, which is singular with respect to the Riemannian volume measure.   Using Malliavin calculus Shigekawa et al~\cite{ShigekawaUW} 
successfully handled the singular terms on the boundary, thus giving a probabilistic proof of the Gauss-Bonnet-Chern formula.  Malliavin calculus allows the heat kernel to be regarded as a generalized functional on Brownian motion which  can then expanded as an asymptotic power series in the small time parameter in a generalized sense.  This probabilistic approach cannot be regarded as a 
direct extension of the Bismut-Hsu approach because of the use of Malliavin calculus. Over the years there have been several unsuccessful attempts at overcoming the difficulties mentioned
above in order to give a simplified probabilistic approach to the Gauss-Bonnet-Chern theorem for manifolds with boundary. The purpose of the present work is to give an elementary 
probabilistic proof of the  Gauss-Bonnet-Chern theorem without using Malliavin calculus.  Our approach is heavily indebted to the previous works by Hsu~\cite{Hsu} and 
Shigekawa et al.~\cite{ShigekawaUW}. In order to help the reader understand our approach, in the remainder of this section we outline the major steps of the paper.

We first review some basic facts about the McKean-Singer analytic formula expressing the Euler characterstic as the integrated supertrace of the Hodge-de Rham heat kernel with the 
absolute boundary condition on the bundle of differential forms.  Then we show how the heat kernel can be represented probabilistically in terms of reflecting Brownian motion
and its lift to the bundle or differential forms via a Feynman-Kac formula using a multiplicative functional.  This multiplicative functional satisfies a special ordinary differential equation which
takes a slightly simpler form in the semi-geodesic coordinates we will use later.  Since the McKean-Singer formula \rf{mckeansinger} is an integral with respect to the Riemannian volume measure, it is 
necessary to separate the supertrace of the heat kernel to single out the part that contributes to the boundary integral.  For this purpose we use the double $\widetilde M$ of the manifold $M$ 
and consider the reflecting Brownian motion on $M$ as the projection of the Brownian motion on $\widetilde M$ to $M$. In this way,  the heat kernel is naturally decomposed into a 
sum of two parts and we recognize that the part on the mirror image of $M$ contributes to the boundary integral of the theorem. 

Next, by the principle of not feeling the boundary, as the time parameter decreases to zero, the calculation of the supertrace of the heat kernel is highly concentrated in an arbitrarily small but 
fixed neighborhood of the base point. For this reason we can without loss of generality choose the semi-geodesic coordinates to carry out our computations of the boundary term locally in 
a neighborhood of the boundary, which can be regarded as a half space. The advantages of 
semi-geodesic coordinates are twofold: first, it singles out the normal component so that the diffusion process in this direction satisfies a relatively simple stochastic differential equation for 
easy asymptotic computation; second, it shows clearly the role of the second fundamental form through the local expansion of the Riemannian metric near the boundary so that the expansion 
of the multiplicative functional  can be easily obtained.   In semi-geodesic coordinates, the expansion of the stochastic parallel transport and the multiplicative functional can be carried 
out near the boundary in integrated powers of the time and the boundary local time, with the boundary local time counting as one half power of the time. After taking the supertrace, 
the leading nonvanishing terms can be identified.  It turns out that there are $(d+1)/2$ or $d/2$ such terms depending on whether the dimension 
of the manifold is odd or even. They are expressed in terms of the curvature tensor and the second fundamental form at the boundary. The dimension dependent universal constants can be obtained
by evaluating integrated pwoers of the local time of a classical Brownian bridge. 

\section{Heat equation on differential forms on manifold with boundary}

At the boundary of a Riemannian manifold, every differential form $\omega$ has a unique orthogonal decomposition of the form
\begin{equation}
\omega = \omega_{\rm tan} + \omega_{\rm norm} = \omega_1 + d\nu \wedge \omega_2\ ,\label{norm}
\end{equation}
where $d\nu$ is the dual 1-form of the inward unit normal vector and $\omega_1$ and $\omega_2$ have only tangential components. 
The absolute boundary condition is 
\begin{equation}
		\omega_{\text{\rm norm}} = 0, \ \ \  (d\omega)_{\text{\rm norm}} = 0.\label{absolute}
\end{equation}
Let $\delta$ be the formal adjoint operator of the exterior differential $d$ with respect to the standard inner product on $\varGamma(T^*M)$, the space of sections of the differential forms. The Hodge-de Rham Laplacian is defined by:
$$\square_M := -(d\delta + \delta d)\ .$$
It has a unique self-adjoint extension satisfying the absolutely boundary condition \rf{absolute}.  By the de Rham theorem, the space
$$H^k_{\text{abs}} (M;\mathbb{C}) : = \lbr\omega \in\varGamma(\wedge^kM): \square_M \omega = 0, \ \omega_{\rm norm} = 0\rbr$$
can be canonically identified with $H^k (M;\mathbb{C})$,  the absolute simplicial cohomology group of $M$. Note that the two conditions in the above definition implies the second condition 
in \rf{absolute}. It follows that the Euler characteristic of the manifold $M$ is given by 
$$\chi(M) = \sum_k (-1)^k\text{dim} \, H_{\text{abs}}^k (M;\mathbb{C}).$$
Let $\omega_0$ be a differential form on $M$ and consider the following initial boundary valued problem for $\omega = \omega(t,x)$:
\begin{equation}
\left\{
\begin{array}{l}
\frac{\partial\omega}{\partial t} = \frac{1}{2} \square_M \omega,\\
\omega (\cdot,0)= \omega_0,\\

\omega_{\rm norm} = 0,\ (d\omega)_{\rm norm}=0.
\end{array}
\right. \label{heat}
\end{equation}
By general theory of parabolic differential equations, the solution of the above problem can be written in the form
$$\omega(t,x) = \int_M p ^*(t,x, y)\omega_0(y)\, dy, $$
where $p^* (t,x,y):  \wedge_y^kM\rightarrow \wedge_x^kM$ is the heat kernel.  Note that $p^* (t,x,x)$ is a homomorphism from $\wedge_x^kM$ into itself. 

Suppose $V$ is a finite dimensional vector space. Each $T\in\text{End}(V^*)$ can be extended to the whole alternating tensor algebra $\wedge^* V$ uniquely as a derivation (still denoted by $T$):
$$T(\theta_1 \wedge \theta_2) = (T \theta_1) \wedge \theta_2 + \theta_1 \wedge (T \theta_2).$$
Thus $T: \wedge^*V\rightarrow \wedge^*V$ is a degree-preserving linear map. 
We define the supertrace $\str(T)$ for a degree-preserving linear map $T: \wedge^* V \rightarrow \wedge^* V$ by
$$ \str(T) = \sum_{k=0}^{n} (-1)^k \Tr (T|_{\wedge^k V})\ .$$
We have the following McKean-Singer formula for the Euler characteristic (see {\sc Theorem} 7.3.1 in Hsu~\cite{Hsu}).
\begin{theorem} For for all $t > 0$ we have
\begin{equation}
\chi(M) = \int_{M} \str p^* (t,x,x) dx. \label{Euler}
\end{equation}
\end{theorem}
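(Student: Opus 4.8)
The plan is to prove the McKean--Singer formula \rf{Euler} by combining the Hodge-theoretic spectral decomposition of the heat semigroup with the cancellation of supertraces over the nonzero spectrum. First I would invoke the general theory of parabolic equations with the absolute boundary condition \rf{absolute} to assert that the self-adjoint operator $\square_M$ on $\varGamma(\wedge^k M)$ has compact resolvent, hence a discrete spectrum $\{-\lambda_j^{(k)}\}_{j\ge 0}$ with $\lambda_j^{(k)}\ge 0$ and an orthonormal eigenbasis $\{\varphi_j^{(k)}\}$ of the $L^2$ completion, each $\varphi_j^{(k)}$ satisfying $(\varphi_j^{(k)})_{\rm norm}=0$ and $(d\varphi_j^{(k)})_{\rm norm}=0$. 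The heat kernel then has the eigenfunction expansion $p^*(t,x,y) = \sum_{k}\sum_j e^{-\lambda_j^{(k)}t/2}\,\varphi_j^{(k)}(x)\otimes\varphi_j^{(k)}(y)$, so that, taking $x=y$, integrating against $dx$, and using orthonormality,
\begin{equation}
\int_M \Tr\bigl(p^*(t,x,x)|_{\wedge^k M}\bigr)\,dx = \sum_j e^{-\lambda_j^{(k)}t/2}. \label{traceexp}
\end{equation}
Multiplying by $(-1)^k$ and summing over $k$ gives $\int_M \str p^*(t,x,x)\,dx = \sum_k (-1)^k \sum_j e^{-\lambda_j^{(k)}t/2}$.

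The second step is the standard Hodge--McKean--Singer cancellation. On the zero eigenspace the exponential is $1$, so the contribution of the kernel of $\square_M$ is $\sum_k (-1)^k \dim H^k_{\rm abs}(M;\mathbb C)$, which by the de Rham theorem quoted above equals $\chi(M)$. It remains to show that the contribution of the strictly positive part of the spectrum vanishes identically in $t$. For this I would exhibit, for each fixed $\lambda>0$, a degree-shifting isomorphism between the $(-\lambda)$-eigenspace in degree $k$ of forms satisfying the absolute boundary condition and the corresponding eigenspace in degree $k\pm 1$, obtained from the operators $d$ and $\delta$. Concretely, on the orthogonal complement of the harmonic forms one has $\square_M = -(d\delta+\delta d)$ with $d$ and $\delta$ commuting with $\square_M$ and mapping the eigenspace $E^{(k)}_\lambda$ into $E^{(k+1)}_\lambda$ and $E^{(k-1)}_\lambda$ respectively; the identity $d\delta + \delta d = -\square_M = \lambda\cdot\mathrm{id}$ on $E^{(k)}_\lambda$ shows that $\delta d$ and $d\delta$ are, up to the factor $\lambda$, complementary projections, so $d$ restricted to $\ker(d)^\perp \cap E^{(k)}_\lambda$ is an isomorphism onto $\mathrm{im}(d)\cap E^{(k+1)}_\lambda$ with inverse $\lambda^{-1}\delta$. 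Summing the resulting identities $\dim E^{(k)}_\lambda = \dim(\ker d\cap E^{(k)}_\lambda) + \dim(\mathrm{im}\,\delta \cap E^{(k)}_\lambda)$ in the alternating fashion telescopes to zero, i.e.\ $\sum_k (-1)^k \dim E^{(k)}_\lambda = 0$. Hence the nonzero spectrum contributes nothing and \rf{Euler} follows for every $t>0$.

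The one point requiring genuine care — and what I expect to be the main obstacle — is verifying that the operators $d$ and $\delta$ genuinely preserve the absolute boundary condition domain, so that the decomposition above takes place within the correct function spaces; this is where the precise form of \rf{absolute}, in particular the fact that $\square_M\omega = 0$ together with $\omega_{\rm norm}=0$ already forces $(d\omega)_{\rm norm}=0$ (as noted in the text after the definition of $H^k_{\rm abs}$), must be used. Equivalently, one checks that $d$ maps the domain of the absolute-boundary-condition Laplacian into itself and that $\delta$ does likewise — a standard but slightly delicate boundary computation with the decomposition \rf{norm} — after which Green's formula shows $d$ and $\delta$ are formal adjoints on this domain and the spectral argument goes through verbatim. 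Since the statement here is quoted from Hsu~\cite{Hsu}, I would present this proof in condensed form, emphasizing the eigenfunction expansion \rf{traceexp} and the telescoping cancellation, and refer to the cited source for the boundary-regularity details.
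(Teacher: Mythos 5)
Your argument is correct, and it is essentially the proof the paper relies on: the paper itself gives no proof, deferring to {\sc Theorem} 7.3.1 of Hsu~\cite{Hsu}, where the formula is established by exactly this spectral decomposition together with the Hodge-theoretic cancellation of the nonzero spectrum (the exactness of the eigenform complex under $d$, using $d\delta+\delta d=\lambda$ on each $\lambda$-eigenspace). Your closing caveat about $d$ and $\delta$ preserving the absolute boundary condition is the right point to flag, and it is handled in the cited source.
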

We will use this by taking the limit of the supertrace as $t\downarrow0$. 

\section{Local geometry near the boundary}

Let $\partial M$ be the boundary of $M$. Near a point $o\in \partial M$,  the manifold can be parametrized by a system of semi-geodesic coordinates $x = (x^1, \overline{x} )$, 
where $x^1$ is the distance of $x$ to $\partial M$ and $\overline{x} = (x^2, \ldots, x^n)$ a system of normal coordinates of $\partial M$ centered at $o$. In these coordinates, 
the Riemannian metric is given by
\begin{equation} 
g_{ij}(x) = \begin{cases}
\delta_{1j}, &i = 1, 1\le j\le n;\\
\delta_{ij} + 2H_{ij}x_1 +  O(\vert x\vert^2),\ &2\le i, j\le n.
\end{cases}\label{cartan}
\end{equation}
Here
$$H_{ij}=\left\langle \frac\partial{\partial x^1}, \nabla_{\partial/\partial x^i}\frac\partial{\partial x^j}\right\rangle$$
is the second fundamental form of $\partial M$ at $o$. In general, the  local coordinates are only defined in a 
neighborhood of the origin of $\mathbb{R}_{+}^n$. However, we can extend the local chart $(x^1, \overline{x})$ to the whole $\mathbb{R}_{+}^n$ in such a way that
\rf{cartan} holds for $g_{ij}$ on the whole space. By the principle of not feeling the boundary (see Hsu\cite{Hsu1}),  the computation of the boundary term as $t\downarrow 0$ 
(see the second term in \rf{double2} below) can be localized near a fixed point at the boundary. Therefore we may consider the manifold $M = \mr^n_+$ with an Riemannian metric satisfying 
\rf{cartan} globally on $\mr_+^n$.  A crucial simplifying advantage of such a coordinate system is that the normal and tangential projection operators $P$ and $Q =I-P$ are expressed as 
constant matrices.

Next, we examine the normal component of $\widetilde E$ in the semi-geodesic coordinates.  We have $g^{11} = 1$ and $g^{1i} = 0$ for\ $i\ge 2$,  hence the 
Laplace-Beltrami operator in these coordinates has the special form
\begin{equation}
\Delta = \frac{\partial^2}{\partial x^1\partial x^1} + b^1 \frac{\partial}{\partial x^1} + \sum_{i,j=2}^{d} g^{ij} \frac{\partial^2}{\partial x^i \partial x^j} + \sum_{i =2}^{d} b^i \frac{\partial}{\partial x^i}
\label{lb}
\end{equation}
where 
\begin{equation}
b^i (x) = \frac{1}{\sqrt{\det g(x)}} \frac{\partial}{\partial x^j}\left(\sqrt{\det g(x)} g^{ji} (x)\right).\label{driftb}
\end{equation}

\section{Reflecting Brownian motion and the probabilistic representation}

If a differential form on $M$ is decomposed into the tangent and the normal components  on the boundary
$$\omega = \omega_{\rm tan} + \omega_{\rm norm}$$
as described in \rf{norm}, at each point $x\in\partial M$ we define the normal projection $P$ and the tangential projection $Q= I-P$ by 
$$P\omega = \omega_{\rm norm}= d\nu\wedge\omega_2 \ \text{   and   } \ Q\omega = \omega_{\rm tan}.$$

Let $H: T_{x} \partial M \rightarrow T_{x} \partial M,\ x \in \partial M$ be the second fundamental form at $x\in\partial M$.  Its dual acts on $T_x^*\partial M$ and can be extended to 
$\wedge^* \partial M$ as a derivation in the fashion mentioned before. We denote this extension still by $H$. Then the absolute boundary condition is equivalent to
$$Q[\nabla_\nu - H ]\omega- P\omega = 0 \ \ \text{ on } \partial M,$$
where $\nabla_\nu$ is the covariant derivative along  $\nu$; see Hsu~\cite{Hsu3}.
For the probabilistic representation of the heat equation on forms, it is necessary to lift the heat equation to the orthonormal frame bundle $\mso(M)$. 
On the orthonormal frame bundle the projections $P$ and $Q$ have their natural horizontal liftings (still denoted by the same letters). The covariant derivative $\nabla_\nu\omega$ becomes 
the usual derivative of the scalarization $\widetilde\omega$ along the horizontal lift $\widetilde\nu$. Hence the absolute boundary condition becomes
$$Q[\widetilde{\nu} - H ] \widetilde{\omega} - P \widetilde{\omega} = 0 \text{ on } \  \ \partial \mso (M).$$

We now consider the probabilistic representation of the solution of the heat equation \rf{heat}.  Consider the following stochastic differential equation on the frame bundle $\mso(M)$:
$$d U_t = \sum_{i=1}^{n} H_i (U_t) \circ dB_t^i + \tilde{\nu}(U_t)dl_t.$$
Here $\lbr H_i\rbr$ are the canonical horizontal vector fields on $\mso(M)$ and $l$ is the boundary local time for $U$. 
The solution ${U_t}$ is a horizontal reflecting Brownian motion and the projection $X_t = \pi U_t$ is the usual reflecting Brownian motion on $M$ and
$l$ is also the boundary local time of $X$. 

Let $\widetilde\omega = \widetilde\omega(u, t)$ be the scalarization of the solution of \rf{heat}.  We have the following representation
\begin{equation}
\widetilde{\omega}(u,t) = \mathbb{E}_u \{M_t \widetilde{\omega}_0 (u_t)\}. \label{kac}
\end{equation}
Here $\lbr M_t\rbr$ is a right continuous matrix valued multiplicative functional (see Hsu~\cite{Hsu1} and Ouyang~\cite{Ouyang}.  In the semi-geodesic coordinates
the projection operators $P(x)$ and $Q(x)$ are represented by constant matrices independent of $x\in\partial M$, hence we can regard them as
defined on the whole manifold as constant matrices and denote them simply by $P$ and $Q$.  Decomposing $M_t$ into the normal and tangential components as
$$M_t = M_tP + M_tQ\xlongequal{\rm def} Y_t+Z_t.$$
Then $M_t$ is the solution of the following system of equations (see (5.10) of Ouyang~\cite{Ouyang}):
$$\begin{cases}
Y_t = I_{\{t < T_{\partial M}\}} e(0,t)P +  I_{\{t > T_{\partial M}\}} Z_{t_{*}}e(t_{*},t)P,\\
Z_t = Q + \displaystyle{\int_{0}^{t}}(Y_s + Z_s ) d \chi_s ,
\end{cases}$$
where $t_{*} =\sup \{ s \leq t : x_s \in \partial M \}$ is the last exit time of the boundary before $t$,  
$$d\chi_s = -H(u_s)dl_s + \frac{1}{2}\Omega( u_s)ds,$$
and $\{ e(s,t), t \ge s \}$ is  the solution of
$$\frac{d}{dt} e(s,t) =\frac{1}{2} e(s,t) \Omega( u_t), \ \ \ \ e(s,s)= I.$$
Here $\Omega$ is the canonical action of the curvature tensor on the bundle of differential forms regarded as a linear combination of twice compositions  of $\text{End}(\mr^d)$; namely, 
$$\Omega \in (\mathbb{R}^d)^* \otimes \mathbb{R}^d \otimes (\mathbb{R}^d)^* \otimes \mathbb{R}^d = \text{End}(\mathbb{R}^d) \otimes_\mr\text{End}(\mathbb{R}^d).$$
It is denoted by $\Omega$ in Hsu~\cite{Hsu},  page 195.
Note that the $dQ$ term in Ouyang~\cite{Ouyang} vanishes in the semi-geodesic coordinates because $Q$ is a constant.

Before Brownian motion hits the boundary (i.e., $t \leq  T_{\partial M}$), the process $M_t$ evolves simply by $dM_t = (1/2)M_s\Omega( u_t)dt$, or equivalently, 
\begin{equation}
M_t= I + \frac{1}{2} \int_{0}^{t} M_s \Omega( u_s) ds. \label{before}
\end{equation}
So at the first hitting time $T_{\partial M}$ of the boundary,  the tangent component is 
$$M_{ T_{\partial M}} Q = e(0,  T_{\partial M}) Q.$$
Although $M_t$ is discontinuous at this time, $Z_t = M_t Q$ is continuous and satisfies the differential equation $dZ_t = (Y_t + Z_t ) d \chi_t$ after the hitting time with the initial condition $Z_{ T_{\partial M}} 
= e(0, T_{\partial M})Q$. Substituting the equation of $Y_t$ into this equation, we obtain a self-contained equation for $Z_t$ in the semi-geodesic coordinates for $t> T_{\partial M}$,
\begin{equation}
Z_t = Z_{T_{\partial M}} + \int_{T_{\partial M}}^{t} \Phi (Z_s)d\chi_s, 
\end{equation}
where $\Phi(Z_s)$ is given by
\begin{equation}
\Phi(Z_s) = Z_{s_*}e(s_*,s) P + Z_s.
\end{equation}
The normal component $Y_t = M_t P$ starts from zero immediately after each time the path hits the boundary.  Thus for $t > T_{\partial M}$ ,
$$Y_t = Z_{t^*} e(t^*, t) P= (Y_{t^*} + Z_{t^*}) e(t^* , t) P=  M_{t^*} e(t^* , t) P.$$
Therefore for  $t > T_{\partial M}$,
\begin{equation}
M_t = Y_t + Z_t = Q + \int_{0}^{t} M_s d\chi_s + M_{t^*} e(t^* , t) P. \label{after}
\end{equation}

\section{Doubling of the manifold}

For asymptotic calculations involving the Neumann boundary condition, it is more convenient to consider the doubling of the manifold so that we work on a manifold without boundary. 
Denote the doubled manifold along the boundary $\partial M$ by $\widetilde{M} = M \cup M^*$. For each $x \in M$, there is a mirror point on $M^*$ and we denote it by $x^*$.   
We denote the heat kernel on $\widetilde{M}$ by $q(t,x,y)$.  The Neumann heat kernel $p_0(t,x,y)$ on functions on $M$ can be expressed in terms of the heat kernel $q(t,x,y)$ on $\widetilde M$ by 
\begin{equation}p_0 (t, x, y) = q(t, x, y) + q(t, x , y^*).\  \label{kernel}
\end{equation}
Denote the Riemannian Brownian motion on $\widetilde M$ by $E_t$, then the projection $X_t = \pi(E_t)$ is a reflecting Brownian motion on $M$. 

Now we are ready to express the heat kernel $p^* (t,x,y)$, the heat kernel on differential forms,  in terms of quantities on the doubled manifold $\widetilde{M}$.  Suppose $F$ is a function defined on the 
path space of $M$. We can extend it to a function defined on paths of $\tilde{M}$ by $\widetilde F(x) = F (\pi \circ x)$.  From (\ref{kac})  we can write
\begin{align*}
&\quad \mathbb{E}_x [M_t u_t^{-1}\omega_0(X_t) ]\\
&= \mathbb{E}_x \left[\mathbb{E}_{t;x,E_t}\{M_t u_t^{-1}\}\omega_0(E_t)\right]\\ 
& = \int_{M} q(t,x,y)\mathbb{E}_{t;x,y}\{M_t u_t^{-1}\}\omega_0(y) dy+ \int_{M^*} q(t,x,y)\mathbb{E}_{t;x,y}\{M_t u_t^{-1}\}\omega_0(y) dy\\  
& = \int_{M} [q(t,x,y)\mathbb{E}_{t;x,y}\{M_t u_t^{-1}\} + q(t,x,y^*)\mathbb{E}_{t;x,y^*}\{M_t u_t^{-1}\}]\omega_0(y) dy.
\end{align*}
Here $\me_{t;x,y}$ denotes the expectation with respect to Brownian bridge from $x$ to $y$ in time $t$ on the double $\widetilde M$. 
Comparing the solution of \rf{heat} with the left hand side of (\ref{kac}), we have
\begin{equation}
p^* (t,x,x) = q(t,x,x)\mathbb{E}_{t;x,x}\{M_t u_t^{-1}\} + q(t,x,x^*)\mathbb{E}_{t;x,x^*}\{M_t u_t^{-1}\}. \label{double}
\end{equation}
The multipicative functional $M$ behaves differently depending on whether the path hits the boundary or not, so we separate the two cases by 
$$\mathbb{E}_{t;x,x}\{M_t u_t^{-1}\} = \mathbb{E}_{t;x,x}\{M_t u_t^{-1}\mi_{\lbr T_{\partial M}\le t\rbr} \} + \mathbb{E}_{t;x,x}\{M_t u_t^{-1} \mathbb{I}_{\lbr T_{\partial M} > t\rbr} \}.$$

Let $W(\tilde{M})$ be the path space on $\tilde{M}$. We define the reflection map $R: W(\tilde{M}) \rightarrow W(\tilde{M})$ as follows:
$$(Rx)_s = \left\{
\begin{array}{lr}
	 x_s, \ \ \ \  &  s < T_{\partial M} ,  \\
	 x_s^*, \ \ \  & s \ge T_{\partial M}.
\end{array}
\right.$$
Namely, we reflect the path to its mirror point after it hits the boundary for the first time. The next lemma builds a connection between the Brownian bridge from $x$ to $x$ and from $x$ to $x^*$.

\begin{lemma} \label{reflection} Suppose that $G$ is a function on the path space $W(\widetilde{M})$. For $x\in M$, 
	$$	
	\mathbb{E}_{t;x,x} \{  G(R E )\mathbb{I}_{\lbr T_{\partial M} \leq t\rbr}\} = \frac{q(t,x,x^* )}{q(t,x,x )} \mathbb{E}_{t;x,x^*} \{G(E)\}\ .
$$
\end{lemma}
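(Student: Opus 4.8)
The plan is to prove the identity by a pathwise reflection argument for the Brownian bridge, using the fact that the reflection map $R$ only modifies a path after its first hitting time $T_{\partial M}$, combined with the strong Markov property of the bridge at that stopping time. First I would fix $x\in M$ and condition on the first hitting time: on the event $\{T_{\partial M}\le t\}$, decompose the bridge path $E$ into its restriction to $[0,T_{\partial M}]$ and the post-$T_{\partial M}$ piece. Writing $E'$ for the post-hitting trajectory started at $E_{T_{\partial M}}\in\partial M$ and run for time $t-T_{\partial M}$, the key structural point is that under $\me_{t;x,x}$ the conditional law of $E'$ given the initial segment is that of a (free) Brownian bridge on $\widetilde M$ from the boundary point $E_{T_{\partial M}}$ to the endpoint $x$ in time $t-T_{\partial M}$. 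The doubling of $\widetilde M$ is isometric under the reflection involution $z\mapsto z^*$, which fixes $\partial M$ pointwise; therefore a bridge from a boundary point to $x$ has exactly the same law as a bridge from that same boundary point to $x^*$ after applying the involution. Since $RE$ agrees with $E$ on $[0,T_{\partial M}]$ and equals $E'^{\,*}$ afterward, this gives an equality of conditional laws of $(E|_{[0,T_{\partial M}]},\,RE|_{[T_{\partial M},t]})$ under $\me_{t;x,x}$ with the corresponding object under $\me_{t;x,x^*}$ — up to a Radon--Nikodym factor coming from the differing bridge normalizations.

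The second step is to compute that Radon--Nikodym factor explicitly. Conditioning the $x\to x$ bridge on $\{T_{\partial M}=s\}$ (and on the position $E_s=z\in\partial M$) produces, for the tail, a bridge from $z$ to $x$ in time $t-s$; the density of this decomposition against the full $x\to x$ bridge involves the transition kernel $q(t-s,z,x)$ and the normalizing constant $q(t,x,x)$. Doing the same computation for the $x\to x^*$ bridge gives the analogous expression with $q(t-s,z,x^*)$ and $q(t,x,x^*)$ in place of $q(t-s,z,x)$ and $q(t,x,x)$. But $z\in\partial M$, so by the reflection isometry $q(t-s,z,x^*)=q(t-s,z^*,x)=q(t-s,z,x)$ — the boundary point is fixed by the involution, so the two tail kernels coincide. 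Hence all the $s$- and $z$-dependence in the ratio of the two conditional decompositions cancels, leaving precisely the constant factor $q(t,x,x^*)/q(t,x,x)$. Integrating $G(RE)$ against the $x\to x$ bridge restricted to $\{T_{\partial M}\le t\}$ thus equals $q(t,x,x^*)/q(t,x,x)$ times the integral of $G(E)$ against the $x\to x^*$ bridge (the indicator $\mi_{\{T_{\partial M}\le t\}}$ disappears on the right because any path on $\widetilde M$ from $x\in M$ to $x^*\in M^*$ must cross the boundary, so $T_{\partial M}\le t$ almost surely under $\me_{t;x,x^*}$).

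The main obstacle I anticipate is making the bridge-decomposition rigorous at the stopping time $T_{\partial M}$: one must justify the strong Markov property for the pinned process and track the $h$-transform densities carefully, since a Brownian bridge is not a Markov process with stationary transitions but a space-time $h$-transform of Brownian motion. The cleanest route is probably to establish the identity first for the \emph{unconditioned} reflecting/Brownian motion on $\widetilde M$ — where the strong Markov property at $T_{\partial M}$ and the reflection isometry give the analogue of Lemma~\ref{reflection} without any Radon--Nikodym factor — and then disintegrate over the terminal position to pass to bridges, the factor $q(t,x,x^*)/q(t,x,x)$ emerging from comparing the two disintegrations. One should also note that $G$ need only be measurable (and, say, bounded or nonnegative) for the manipulations to be valid, and that the isometry $z\mapsto z^*$ genuinely fixes $\partial M$ and preserves the heat kernel $q$, which is where the hypothesis that $\widetilde M$ is the metric double is used.
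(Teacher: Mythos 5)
Your argument is correct, but your primary route differs from the paper's. You disintegrate the two bridge measures over the first hitting data $(T_{\partial M}, E_{T_{\partial M}})$, invoke the strong Markov property of the pinned process at that stopping time, match the post-hitting conditional laws via the reflection isometry (using $z^*=z$ for $z\in\partial M$, hence $q(t-s,z,x^*)=q(t-s,z,x)$), and observe that the ratio of the hitting distributions is then the constant $q(t,x,x^*)/q(t,x,x)$. The paper instead disintegrates the \emph{unconditioned} Brownian motion on $\widetilde M$ over its terminal position: for a test function $f$ it computes $\me_x\{f(E_t)G(RE)\mi_{\lbr T_{\partial M}\le t\rbr}\}$ in two ways, using $f(E_t)=f((RE)_t^*)$ on $\lbr T_{\partial M}\le t\rbr$ together with the reflection principle ($RE$ has the law of $E$ under $\mp_x$), then reads off the identity of the densities $q(t,x,y)\,\me_{t;x,y}\{\cdots\}$ and $q(t,x,y^*)\,\me_{t;x,y^*}\{\cdots\}$ and sets $y=x$. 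This is exactly the ``cleanest route'' you sketch in your final paragraph, and it is preferable for the reason you yourself anticipate: it never needs the strong Markov property or any $h$-transform bookkeeping for the pinned process, only the elementary fact that bridge laws are the disintegration of $\mp_x$ over $E_t$. Your first-hitting decomposition buys a more explicit explanation of where the factor $q(t,x,x^*)/q(t,x,x)$ comes from, at the cost of having to rigorously justify the bridge decomposition at a stopping time. Both arguments rest on the same two essential inputs, which you correctly isolate: the reflection $z\mapsto z^*$ is a heat-kernel-preserving isometry of $\widetilde M$ fixing $\partial M$, and $\mp_{t;x,x^*}\lbr T_{\partial M}\le t\rbr=1$ so the indicator can be dropped on the right-hand side.
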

\begin{proof} It is clear  that
	$ T_{\partial M} (RE) = T_{\partial M}(E)$ and  $(RE)^*_t = E_t$ for $t\ge T_{\partial M}$. Suppose that $f\in C^{\infty} (\widetilde M)$. We have
\begin{align}
	&\quad\int_{\widetilde M} f(y)q(t,x,y) \mathbb{E}_{t,x,y} \{G(RE)\mathbb{I}_{T_{\partial M \leq t}}  \} dy \nonumber \\
	&= \mathbb{E}_x \{f(E_t) G(RE)\mi_{\lbr T_{\partial M}\le t\rbr} \} \nonumber\\
	& =\mathbb{E}_x \{f((RE)_t^*) G(RE)\mi_{\lbr T_{\partial M}\le t\rbr} \} \nonumber\\
	& =\mathbb{E}_x \{f(E_t^*) G(E)\mi_{\lbr T_{\partial M}\le t\rbr} \}\ .  \nonumber
\end{align}
The last line follows from the standard reflection principle stating that $(RE)_t$ is also a Brownian motion on $\tilde{M}$  starting from $x$. On the other hand,
	$$\mathbb{E}_x \{f(E_t^*) G(E)\mi_{\lbr T_{\partial M}\le t\rbr}\} = \int_{\widetilde M} f(y)q(t,x,y^*) \mathbb{E}_{t,x,y^*} \{ G(E)\mi_{\lbr T_{\partial M}\le t\rbr}\} dy\ .$$ 
Hence the equality
\begin{align*}
\int_{\widetilde M} f(y)q(t,x,y)& \mathbb{E}_{t,x,y} \{G(RE)\mi_{\lbr T_{\partial M}\le t\rbr} \} dy \\
= & \int_{\widetilde M}f(y)q(t,x,y^*) \mathbb{E}_{t,x,y^*} \{ G(E)\mi_{\lbr T_{\partial M}\le t\rbr} \} dy
\end{align*}
holds for every test function $f\in C(\widetilde M)$, which implies that
$$q(t,x,y) \mathbb{E}_{t,x,y} \{G(RE)\mi_{\lbr T_{\partial M}\le t\rbr}\} = q(t,x,y^*) \mathbb{E}_{t,x,y^*} \{\mi_{\lbr T_{\partial M}\le t\rbr}G(E) \}\ .$$
The lemma follows by letting $y = x$ and using $\mp_{t, x, x^*}\lbr T_{\partial M}\le t\rbr = 1$.
\end{proof}

We are mainly concerned with $M_t$ and $u_t^{-1}$ as functions on the path space $W(M)$ and $W(\widetilde M)$. They are related by 
$$M(RE)_t = M(E)_t \quad\text{and}\quad u_t^{-1}(RE) =  u_t^{-1}(E).$$
Applying the preceding lemma to $M_tu_t^{-1}$ we have
$$\mathbb{E}_{ t;x,x} \{M_t u_t^{-1} \mathbb{I}_{T_{\partial M} \leq t} \} = \frac{q(t,x,x^*)}{q(t,x,x)} \mathbb{E}_{t;x,x^*} \{ M_t u_t^{-1} \}\ .$$
Combining this with \rf{double}, we conclude that
\begin{align}
&p^* (t,x,x) \label{double2}\\
= & \ q(t,x,x)\mathbb{E}_{t;x,x}\{M_t u_t^{-1}\mi_{\lbr T_{\partial M}> t\rbr}\}+ 2q(t,x,x^*)\mathbb{E}_{t;x,x^*}\{M_t u_t^{-1}\}.\nonumber
\end{align}
On the set $\lbr T_{\partial M}> t\rbr$ the boundarly local time is zero.  For all $x\in M\backslash\partial M$, we have 
$\mp_{t;x, x}\lbr T_{\partial M}> t\rbr\uparrow 1$ and $(2\pi t)^{d/2}q(t,x,x) \rightarrow1$ as $t\downarrow 0$. It follows that as in the case without boundary, 
$$\lim_{t\downarrow0}q(t,x,x)\me_{t;x,x}\left[\str\lbr M_t u_t^{-1}\rbr\mi_{\lbr T_{\partial M}> t\rbr}\right]\rightarrow e_M(x)$$
and 
$$\lim_{t\downarrow0}\int_{M}q(t,x,x)\me_{t,x,x}\left[\str\lbr M_tu_t^{-1}\rbr\mi_{\lbr T_{\partial M}> t\rbr}\right] dx = \int_M e_M(x)\, dx.$$
This yields the integral on the manifold in the Gauss-Bonnet-Chern formula.  We will show below that it is exactly the second 
term in \rf{double2} that contributes the boundary integral in the Gauss-Bonnet-Chern formula.

\section{Asymptotic calculations near the boundary}

On the double $\widetilde M$ we have $x^* = (-x^1 , \overline{x})$ and $\pi (x^1, \overline x) = (\vert x^1\vert, \overline x)$.
The expansion of the Riemannian metric in \rf{cartan} holds with $x^1$ there replaced by $\vert x^1\vert$. Thus the Riemannian 
metric is smooth on each copy of the manifold but its derivative along the normal direction has a jump described precisely by the second fundamental form $H$ and 
the Laplace-Beltrami operator $\Delta_{\widetilde M}$ on $\widetilde{M}$ has jumps in the coefficients of the first derivatives accordingly. More precisely, 
the drift $b^1 (x^1,\overline{x})$ in the Laplace-Beltrami operator on $\widetilde M$ is discontinuous at the boundary and satisfies $b^1 (x^*) = - b^1 (x)$.
We have seen that in the Laplace-Beltrami operator \rf{lb} the derivatives with respect to $x^1$ are separated from those in the other coordinates.  Thus the normal 
component $E_t^1$ satisfies the stochastic differential equation of the form
\begin{equation}
d E_s^1 = d W_s^1 + b^1 (E_s)ds,
\end{equation}
where $W^1$ is a standard one-dimensional Brownian motion.  

We first analyze the behavior of the integration of \rf{Euler} near the boundary. Fix a positive small $h$ and  consider the collar $\partial M \times [0,h]$ near the boundary.  In the semi-geodesic coordinates 
we have
\begin{equation}
\frac{\sqrt{\det g(x^1 , \overline{x})}}{\sqrt{\det g(0 , \overline{x})}} = 1 + O(|x^1 |), \label{volcomp}
\end{equation}
for $(x^1 , \overline{x}) \in \partial M \times [0,h]$. This shows that asymptotically the Riemannian volume measures on $M$ and on $\partial M$ are related by $\text{vol}_M = \text{vol}_{\partial M}\cdot dx^1$. 
Since the lines $\overline x = $ constant are geodesics, by (4.9) of Hsu~\cite{Hsu2},
\begin{align*}
q(t,x,x^*) = & \left[\frac{1}{(2 \pi t )^{d/2}} + O\left(\frac{1}{t^{(d-1)/2}}\right)\right]e^{- d(x,x^*)^2 /2t} \\
	=& \left[\frac{1}{(2 \pi t )^{d/2}} + O\left(\frac{1}{t^{(d-1)/2}}\right)\right]e^{- 2\vert x^1\vert^2 /t}.
\end{align*}
Now we have
\begin{align}\label{integration1}
& \int_{\partial M \times [0, h]} q(t,x,x^*)\mathbb{E}_{t;x,x^*}\str \{M_t u_t^{-1}\}\text{vol}_M(dx) \\ 
\sim& \int_{\partial M \times [0, h]}q(t,x,x^*) \mathbb{E}_{t;x, x^*} 
\str\{M_t  u_t^{-1} \}\ \sqrt{\det g(x^1 , \overline{x})}\, dx^1 d\overline x\nonumber\\
\sim &  \int_{\partial M} \text{vol}_{\partial M} (d\overline x)\int_{0}^{\frac{h}{\sqrt{t}}}\frac{e^{-2u^2}}{(2 \pi)^{\frac{d}{2}} t^{\frac{d-1}{2}}} 
\me_{(t,u, \overline x)} \str\{M_t u_t^{-1} \} du +R_t.\nonumber
\end{align}
Here we have used \rf{volcomp} and made the change of variable $x = \sqrt t \, u$.  For simplicity we have used a simplified notation
$$\me_{(t,u, \overline x)} := \mathbb{E}_{t;(\sqrt{t}u, \overline{x}), (-\sqrt t, u \overline x)}.$$
The error term $R_t$ has the bound
$$\vert R_t \vert \le \frac C{t^{d/2-1}}\int_{\partial M} \text{vol}_{\partial M} (d\overline x)\int_{0}^{\frac{h}{\sqrt{t}}} e^{- 2 u^2} 
\bigg\vert\me_{(t,u, \overline x)}\str\{M_t u_t^{-1} \}\bigg\vert du.$$

\begin{remark} The proof that the limit in \rf{integration1} exists and the error term tends to zero as $t\downarrow 0$ depends on the algebraic ``fantastic cancellation'' proved by 
Patodi~\cite{Patodi}.  For the case of a manifold without boundary, the cancellation occurs pathwise and the limit exists even before taking the expectation. 
For the case with boundary,  $M_t$ and $u_t$ contains the boundary local time and the limit exists only after taking the expectation.
\end{remark}

From the above computation, it is clear that all we need to show is the existence of an explicitly identifiable constant $C(u)$ such that 
$$\me_{(t,u, \overline x)} \str\{M_t u_t^{-1} \}\sim C(u) t^{(d-1)/2}.$$
This will imply that as $t\downarrow 0$ the error term vanishes and the limit of \rf{integration1} exists (after explicitly evaluating the integral of $e^{-2u^2}C(u)$ over $[0, \infty)$). 
This can be accomplished by studying the expansions of the stochastic parallel transport $u_t$ and the multiplicative functional $M_t$.  In the presence of boundary, this expansion
involves both time $t$ and the boundary local time $l_t$. 

By choosing an orthonormal frame at $x$,  we can regard  $u_t^{-1}$ as a (random) element taking values in the orthogonal group $O(d)$. We expect that when t is small, $u_t^{-1}$ 
is close to the identity map $I$.

\begin{lemma} \label{parallel}
Fix an $\overline x \in \partial M$. For any positive integer N, there is a constant $K_N$ such that
$$\mathbb{E}_{(t, u, x)} |u_t^{-1} - I|^N \leq K_N t^N\ .$$
\end{lemma}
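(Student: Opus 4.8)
The plan is to estimate $u_t^{-1} - I$ by writing down the stochastic differential equation satisfied by the (inverse) stochastic parallel transport along the reflecting Brownian motion in semi-geodesic coordinates, and then to apply standard moment estimates for stochastic integrals (Burkholder–Davis–Gundy) together with control on the boundary local time. Recall that the horizontal lift $U_t$ solves $dU_t = \sum_i H_i(U_t)\circ dB_t^i + \widetilde\nu(U_t)\,dl_t$, so in a fixed coordinate chart the matrix $u_t$ (or equivalently $u_t^{-1}$) satisfies a linear SDE whose coefficients are the Christoffel symbols of the metric \rf{cartan} evaluated along $X_t$, driven by $dB_t$ and by $dl_t$. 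The key structural point, already emphasized in the paper, is that in semi-geodesic coordinates $g_{1j} = \delta_{1j}$, so the normal direction contributes no Christoffel symbols of the dangerous type; the only Christoffel symbols that appear are $O(|X_t^1|) = O(\sqrt t\,|u|)$ near the boundary together with bounded tangential ones, and crucially the drift-type term multiplying $dl_t$ also has a factor that vanishes (or is $O(x^1)$) at the boundary because the second fundamental form enters $g_{ij}$ only at first order in $x^1$. Thus schematically
\begin{equation}
u_t^{-1} - I = \int_0^t A_s\,u_s^{-1}\,dB_s + \int_0^t C_s\,u_s^{-1}\,ds + \int_0^t D_s\,u_s^{-1}\,dl_s,\nonumber
\end{equation}
where $|A_s|, |C_s|$ are bounded and $|D_s| \le \mathrm{const}$, but more importantly $u_s^{-1} \in O(d)$ so $|u_s^{-1}| = \sqrt d$ is a deterministic constant.

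The main technical simplification I would exploit is precisely that $u_t^{-1}$ takes values in the compact group $O(d)$, so the naive Gronwall blow-up is irrelevant and I only need to bound the $N$-th moment of an $O(d)$-valued stochastic integral. Taking $N$-th powers, applying the BDG inequality to the martingale part and Minkowski/Hölder to the drift and local-time parts, one gets
\begin{equation}
\me_{(t,u,x)}|u_t^{-1}-I|^N \le K\left(\me\Big(\int_0^t |A_s|^2\,ds\Big)^{N/2} + \me\Big(\int_0^t |C_s|\,ds\Big)^N + \me\Big(\int_0^t |D_s|\,dl_s\Big)^N\right).\nonumber
\end{equation}
The first two terms are manifestly $O(t^{N/2})$ and $O(t^N)$ respectively using boundedness of the coefficients on the relevant (small) neighborhood. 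So the whole estimate reduces to showing $\me_{(t,u,x)}(l_t)^N = O(t^N)$ — wait, that is the wrong power; the local time scales like $\sqrt t$, so $\me(l_t)^N \asymp t^{N/2}$, which would only give $t^{N/2}$, not $t^N$. The resolution, and the point I would stress, is that the coefficient $D_s$ in front of $dl_s$ is itself $O(|X_s^1|)$ — on the support of $dl_s$ the path is at the boundary where $X_s^1 = 0$ — so in fact that term contributes zero, or after a more careful accounting (since one really wants the Stratonovich-to-Itô correction and the $O(|x^1|^2)$ remainder in \rf{cartan}) a term of order $t$ times $l_t$, hence $O(t^{3/2})$; in any case it is dominated by $t^{N/2}$ and actually one should get the stronger bound. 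Let me reconsider: the statement claims $K_N t^N$, not $K_N t^{N/2}$, so the decomposition must be arranged so that the leading martingale term's coefficient $A_s$ is also $O(|X_s^1|) = O(\sqrt t \cdot \text{something})$. Indeed the Christoffel symbols $\Gamma^k_{ij}$ in semi-geodesic coordinates all carry a factor $x^1$ (since $g_{ij} - \delta_{ij} = O(x^1)$ and $g_{1j}$ is constant), so $|A_s| \le C|X_s^1|$, and on the Brownian bridge from $(\sqrt t u,\bar x)$ to $(-\sqrt t, u\bar x)$ one has $\me\sup_{s\le t}|X_s^1|^{2p} = O(t^p)$. Then $\me(\int_0^t|A_s|^2 ds)^{N/2} \le C\,\me(\sup_s|X_s^1|^2 \cdot t)^{N/2} = O(t^N)$, and similarly for the drift and local-time pieces, giving exactly $K_N t^N$.

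Carrying this out, the steps in order are: (i) write the linear SDE for $u_t^{-1}$ in the chart, identifying the coefficient matrices and noting each is bounded by $C|X_s^1|$ (this uses \rf{cartan} and the special form \rf{lb}); (ii) recall $|u_s^{-1}| = \sqrt d$ since it lies in $O(d)$; (iii) apply BDG and Minkowski to pass to $\me(\int_0^t |X_s^1|^2\,ds)^{N/2}$, $\me(\int_0^t |X_s^1|\,ds)^N$, and the local-time term $\me(\int_0^t |X_s^1|\,dl_s)^N = 0$; (iv) invoke the standard Brownian-bridge moment bound $\me_{(t,u,x)}\sup_{s\le t}|X_s^1|^{2p} \le C_p\,t^p$, which follows from the SDE $dE_s^1 = dW_s^1 + b^1(E_s)\,ds$ for the normal component together with boundedness of $b^1$ and the reflection representation, or simply from Gaussian bridge estimates; (v) combine to get the bound $K_N t^N$. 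The main obstacle I anticipate is step (iv) combined with making the factor-of-$x^1$ claim in step (i) genuinely rigorous: one must be careful that the bridge conditioning (to the mirror point $x^*$, forcing a boundary crossing) does not spoil the moment bound on $\sup_s |X_s^1|$, and one must verify that the Stratonovich correction term and the $O(|x|^2)$ remainder in \rf{cartan} do not introduce a term of order only $t^{N/2}$. Both are handled by noting the bridge endpoints are at distance $O(\sqrt t)$ from the boundary so the bridge never wanders farther than $O(\sqrt t)$ in the normal direction in $L^p$, and that $O(|x|^2) = O(t)$ on the relevant scale, which only improves the estimate.
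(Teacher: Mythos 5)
Your strategy---writing the linear SDE for $u_t^{-1}$ in semi-geodesic coordinates and estimating term by term with BDG---is genuinely different from the paper's, which simply applies Lemma \ref{reflection} with $G(E)=|u_t^{-1}-I|^N$ to trade the bridge from $x$ to $x^*$ for the loop bridge $\mathbb{E}_{t;x,x}$ at the cost of the bounded factor $q(t,x,x)/q(t,x,x^*)$, and then invokes the known holonomy estimate for a Brownian bridge that returns to its starting point. Unfortunately your argument breaks at its crux. You obtain the power $t^N$ (rather than $t^{N/2}$) by asserting that all the coefficient matrices $A_s,C_s,D_s$, i.e.\ the Christoffel symbols, carry a factor $|X^1_s|$ ``since $g_{ij}-\delta_{ij}=O(x^1)$.'' That inference is false: differentiating the term $2H_{ij}x^1$ in \rf{cartan} in the normal direction gives $2H_{ij}=O(1)$, so $\Gamma^1_{ij}\to -H_{ij}$ and $\Gamma^k_{1j}\to H_{jk}$ as $x^1\downarrow 0$. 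In particular the coefficient of $dl_s$ equals $-H$, not zero, on the support of $dl_s$, and the martingale coefficient is $O(1)$, not $O(\sqrt t)$. Estimating the three integrals separately then yields only $\me\bigl|\int_0^t A_s\,dB_s\bigr|^N=O(t^{N/2})$ and $\me|Hl_t|^N=O(t^{N/2})$, which is exactly the ``wrong power'' you noticed and did not legitimately repair.

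The missing idea is a cancellation, not smallness of coefficients. Since $u_t^{-1}$ depends only on the projected path $\pi E$, which is a loop on $M$ (both $x=(\sqrt t u,\overline x)$ and $x^*$ project to the same point), the first-order term of $u_t^{-1}-I$ is $-\Gamma(x)\cdot(X_t-X_0)=0$: the $O(\sqrt t)$ contributions of $\int dW^1_s$, $\int b^1\,ds$ and $\int dl_s$ sum to $X^1_t-X^1_0=0$, so the dangerous $-H\,l_t$ term is cancelled by the martingale part rather than being absent. Equivalently, one passes to geodesic normal coordinates centered at the base point, where $\Gamma(X_s)=O(|X_s-x|)=O(\sqrt t)$ along the bridge uniformly in all directions, including in front of $dl_s$; only then do BDG and your (correct, and indeed necessary) bound $\me\sup_{s\le t}|X_s-x|^{2p}\le C_pt^p$ deliver $t^N$. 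Your observation that $|u_s^{-1}|$ is deterministically bounded is fine, but without either the loop cancellation or the reduction to $\mathbb{E}_{t;x,x}$ via Lemma \ref{reflection} as in the paper, the argument as written proves only $K_Nt^{N/2}$.
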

\begin{proof} Let $x = (\sqrt tu, \overline x)$ for simplicity. Let $G(E)= |u_t^{-1} - I|^N$, then by definition $G(RE_t) = G(E_t)$. By Lemma \ref{reflection}, we have
$$\mathbb{E}_{t;x,x^*} \{G(E)\} = \frac{q(t,x,x )}{q(t,x,x^* )} \mathbb{E}_{t;x,x} \{  G(RE )\mathbb{I}_{T_{\partial M} \leq t}\}\ .$$
Since there exists a constant $C > 0$ such that $q(t,x,x )/q(t,x,x^* )\leq C$,  the rest of the proof follows that of {\sc Lemma} 7.7.4 of Hsu~\cite{Hsu3}.
\end{proof}	
This lemma guarantees that there is a unique $v_t \in so(d)$ such that 
$$u_t^{-1} = \exp v_t\quad\text{and}\quad  \mathbb{E}_{t;x,x^*} |v_t|^N \leq C_N t^N,$$
and $u_t^{-1}$ can be expanded as a power series of $v_t$. 

Now we come to the expansion of the multiplicative functional $M_t$. 
Iterating equation \rf{after}, we get the  an expansion for $M_t$ in the form 
$$M_t = \sum_{i=0}^lm_i(t) + R_l(t),\ \ \ \ i \in \frac{1}{2}\mathbb{N},$$
where $m_0 (t) = Q$ and
\begin{align}\label{iterate}
m_i (t) =  &\frac12\int_{0}^{t} m_{i - 1} (\tau)\Omega( E_\tau)Q\,d\tau - 
 \int_{0}^{t}m_{i - 1/2}(\tau)H(E_\tau)Q \, dl_{\tau} \\
\qquad\qquad &+\frac12\int_{0}^{t} \mathbb{I}_{\{\tau \ge T_{\partial M}\}}m_{i - 1}(\tau) \Omega( E_\tau) P\, d\tau.\nonumber
\end{align}
Each term in the epxansion is an iterated integral with respect to $ds$ and $dl_s$. More precisely, 
let 
\begin{equation} \label{decomposition1} m_i (t) =\displaystyle \sum_{p + q/2 = i} m_{p,q} (t)\ ,\end{equation}
where $m_{p,q}$ are the sum of terms in $m_i (t)$ such that $\Omega( u_t)dt $ appears $p$ times and $H(u_t)dl_t$ appears $q$ times. 
To find out the size of each term  in the expansion, we need to estimate the moments of the boundary local time. 

\begin{lemma} \label{bound}
For $x \in \partial M \times [0, h]$ and any positive integer $n$.  There is a constant $k_n$ such that
$$\mathbb{E}_{t;x,x^*} [l_t^n] \leq k_n t^{n/2},\ \ t \in [0,1].$$
\end{lemma}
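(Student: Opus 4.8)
The plan is to reduce the bound to a one-dimensional statement about the local time of the normal coordinate process and then use Brownian scaling. In semi-geodesic coordinates $g^{11}=1$ and $g^{1i}=0$, so the Laplace--Beltrami operator splits off the normal derivative (see \rf{lb}); hence the normal coordinate $E^1$ is a scalar diffusion $dE^1_s=dW^1_s+b^1(E_s)\,ds$ with $b^1$ bounded on the collar, and (because $g_{11}=1$) the boundary local time $l_t$ of $\pi E=(|E^1|,\overline E)$ is the symmetric local time at $0$ of $E^1$. In particular $l_t$ depends only on $\pi E$, so $l_t(RE)=l_t(E)$, and feeding $G(E)=l_t^n$ into Lemma \ref{reflection} gives
$$
\mathbb{E}_{t;x,x^*}\{l_t^n\}=\frac{q(t,x,x)}{q(t,x,x^*)}\,\mathbb{E}_{t;x,x}\{l_t^n\,\mathbb{I}_{\{T_{\partial M}\le t\}}\}.
$$
Since $l_{T_{\partial M}}=0$, applying the strong Markov property of the bridge at $T_{\partial M}$ (the post-$T_{\partial M}$ path is a bridge from $E_{T_{\partial M}}\in\partial M$ to $x$) bounds the right-hand expectation by $\mathbb{P}_{t;x,x}(T_{\partial M}\le t)\cdot\sup_{0<s\le t,\,z\in\partial M}\mathbb{E}_{s;z,x}\{l_s^n\}$.

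For the first factor I would use the Gaussian bounds for $q$ from Section 6 (from (4.9) of Hsu~\cite{Hsu2}): $q(t,x,x)/q(t,x,x^*)\le Ce^{2|x^1|^2/t}$ uniformly for $x$ in the collar, while the reflection principle for the essentially one-dimensional bridge from $x$ to $x$ gives $\mathbb{P}_{t;x,x}(T_{\partial M}\le t)\le Ce^{-2|x^1|^2/t}$; the exponentials cancel and the product is bounded uniformly in $x\in\partial M\times[0,h]$, $t\in[0,1]$. For the supremum, by the reduction above $\mathbb{E}_{s;z,x}\{l_s^n\}$ is controlled by the $n$-th moment of the time-$s$ local time at $0$ of the scalar bridge for $E^1$ running from boundary height $0$ to height $x^1$; the scaling $r\mapsto r/s$ in time and $y\mapsto y/\sqrt s$ in space turns this into $s^{n/2}$ times the corresponding moment for a unit-time bridge from $0$ to $x^1/\sqrt s$, and that moment is bounded uniformly over all endpoint heights in $[0,\infty)$ (it is largest near height $0$, the ordinary pinned Brownian bridge, and decays as the height grows). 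The drift $b^1$ and the bridge drift correction become bounded perturbations on the unit interval and are absorbed by a Cameron--Martin/Girsanov estimate, as in Lemma 7.7.4 of Hsu~\cite{Hsu3}. Since $s\le t$ this gives $\sup_{s\le t,\,z}\mathbb{E}_{s;z,x}\{l_s^n\}\le C_n t^{n/2}$, and combining the two factors yields $\mathbb{E}_{t;x,x^*}[l_t^n]\le k_n t^{n/2}$.

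I expect the main obstacle to be this uniformity in $x=(\sqrt t\,u,\overline x)$: $u$ may be as large as $h/\sqrt t$, and then $q(t,x,x)/q(t,x,x^*)\sim e^{2u^2}$ blows up. The resolution is that this is exactly matched by the equally small probability $\mathbb{P}_{t;x,x}(T_{\partial M}\le t)\sim e^{-2u^2}$ that the bridge ever reaches the boundary, and that, conditionally on reaching it, the local-time moments stay bounded (indeed decrease) in $u$. Making the heat-kernel comparison, the hitting-probability estimate and the drift/bridge-correction estimates all uniform in the endpoint, rather than for one fixed bridge, is the only delicate point; beyond that reduction everything rests on standard one-dimensional local-time computations.
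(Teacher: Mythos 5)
Your route is genuinely different from the paper's, and considerably longer. The paper's proof never introduces the hitting time or a hitting probability at all: it reduces by time symmetry to the local time accumulated on $[0,t/2]$, unconditions the bridge at the midpoint via the identity $\mathbb{E}_{t;x,y}\{F\} = q(t,x,y)^{-1}\,\mathbb{E}_x\{q(t/2,E_{t/2},y)F\}$ for functionals $F$ of the path on $[0,t/2]$, bounds the resulting density by a constant using on- and off-diagonal heat kernel bounds, and then quotes the known moment bound $\mathbb{E}_x[l_t^n]\le C_nt^{n/2}$ for \emph{unconditioned} reflecting Brownian motion ({\sc Lemma} 3.2 of Hsu~\cite{Hsu}). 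That is the entire argument: no Gaussian cancellation, no strong Markov decomposition at $T_{\partial M}$, no one-dimensional bridge computation. What your approach buys is a more transparent accounting of where the uniformity in $u=x^1/\sqrt t$ comes from (the paper's three-line proof, as written for the $x$-to-$x$ bridge, arguably hides exactly this point); what it costs is that everything hinges on the two estimates you yourself flag as delicate.

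One of those deserves a sharper warning, because the tool you propose for it would not suffice. You need the product $\bigl(q(t,x,x)/q(t,x,x^*)\bigr)\cdot\mathbb{P}_{t;x,x}(T_{\partial M}\le t)$ to be bounded, hence the hitting estimate $\mathbb{P}_{t;x,x}(T_{\partial M}\le t)\le Ce^{-2|x^1|^2/t}$ with \emph{exactly} the constant $2$ in the exponent, to cancel $e^{+2|x^1|^2/t}$ from the heat kernel ratio. A Cameron--Martin/H\"older argument to strip off the drift $b^1$ degrades this exponent (H\"older with exponent $q>1$ yields only $e^{-(2/q)|x^1|^2/t}$), and the leftover factor $e^{\epsilon|x^1|^2/t}$ is unbounded as $u$ ranges up to $h/\sqrt t$. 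To get the sharp constant you would have to route the cancellation through the heat kernel itself, e.g.\ via $\mathbb{P}_{t;x,x}(T_{\partial M}\le t)=q(t,x,x)^{-1}\mathbb{E}_x\bigl\{\mathbb{I}_{\{T_{\partial M}\le t\}}\,q(t-T_{\partial M},E_{T_{\partial M}},x)\bigr\}$ together with sharp Gaussian upper bounds and $d(E_{T_{\partial M}},x)\ge x^1$ --- which is essentially the paper's midpoint-unconditioning trick in disguise. The remainder of your outline (reflection lemma applied to $G=l_t^n$, strong Markov property of the bridge at $T_{\partial M}$, scaling of the one-dimensional bridge local time, uniform boundedness of its moments in the endpoint height) is sound, though the last of these is asserted rather than proved.
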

\begin{proof} By symmetry we only need to consider the half interval $[0,t/2]$. Then
$$\mathbb{E}_{t;x,x} [l_t^n] = \frac{2\, \mathbb{E}_x \{q(t/2,E_{t/2},x)l_{t/2}^n\} }{q(t,x,x)}\ .$$
Note that there is a constant $C$ such that
$$q(t/2, E_{t/2},x ) \leq \frac{C}{t^{d/2}}\quad\text{and}\quad q(t,x,x) \ge \frac{C^{-1}}{t^{d/2}},$$
hence $\mathbb{E}_{t;x,x} [l_t^n] \leq 2 C^2 \mathbb{E}_{x} [l_{t/2}^n]$. 
By {\sc Lemma} 3.2 of Hsu~\cite{Hsu},  $\mathbb{E}_{x} [l_t^n] \leq C_n t^{n/2}$. The desired estimate follows.
\end{proof}

Applying {\sc Lemma} \ref{bound}, we have
\begin{equation} \label{multi}
\mathbb{E}_{t;x,x^*} \vert m_{p, q}(t)\vert \le C_{p, q} t^{p+q/2}
\end{equation}
and
$$\mathbb{E}_{t;x,x^*} |R_l(t)| \leq C_l t^{l+ 1/2}.$$

For the expansion of  $u_t^{-1} = \exp v_t$, we have
$$u_t^{-1} = \sum_{i=0}^{l'} \frac{(v_t)^i}{i!} + R_{l'}(t)$$
with the estimate
\begin{equation} \label{estimate2}
\mathbb{E}_{t;x,x^*} |R_{l'}(t)| \leq t^{l'+ 1}.
\end{equation}
by {\sc Lemma} \ref{parallel}. 
Now we have 
\begin{align}
&\me_{(t,u, \overline x)}  \str\{M_t  u_t^{-1} \} \label{products}\\
=&\me_{(t,u, \overline x)}  \str\lbr \left[\sum_{i=0}^{i=l}m_i (t) + R_l(t)\right]  u_t^{-1}\rbr\nonumber\\
=  &\me_{(t,u, \overline x)} \str\lbr\left[\sum_{p + q/2\le l}m_{p,q} (t) + R_l(t)\right]\left[\sum_{i=0}^{l'} \frac{(v_t)^i}{i!} + R_{l'}(t)\right]\rbr.\nonumber
\end{align}
The integrand of each $m_{p,q} (t)$ is a product of $\Omega(u_t)$, $H(u_t)$, $P$ and $Q$ and various time points. 
The cancellation of the lower order terms is the consequence of the following purely algebraic result due to Patodi~\cite{Patodi} 

\begin{lemma}\label{patodi} Suppose that $T_i\in \text{\rm End}(V)$. Extend each $T_i$ to be an element in $\text{\rm End}(\wedge^* V)$ and consider the composition 
$ T_1 \cdots T_l \in \text{\rm End}(\wedge^*V)$. If  if $l <\text{\rm dim} V$, then $\str (T_1 T_2 \ldots T_l) = 0$. If $l=\text{\rm dim} V$, then
$$\str(T_1 \cdots T_l) =  (-1)^l\cdot\text{coefficient of } x_1\cdots x_l\text{ in } \det\left(\sum_{i=1}^lx_i T_i\right).$$
In particular, $\str(T_1 \cdots T_l)$ is indpendent of the order of the factors $T_i$. 
\end{lemma}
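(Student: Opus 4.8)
The plan is to prove both assertions by the same computation: expand $\det(\sum_i x_i T_i)$ and read off the coefficient of $x_1\cdots x_l$. First I would fix a basis $e_1,\dots,e_n$ of $V$ with $n=\dim V$ and the corresponding dual basis $e^1,\dots,e^n$ of $V^*$, so that $\wedge^* V^*$ has the monomial basis $e^{j_1}\wedge\cdots\wedge e^{j_k}$ for $j_1<\cdots<j_k$. For a single $T\in\mathrm{End}(V)$, acting on $V^*$ by the (negative) transpose and extended as a derivation to $\wedge^* V^*$, the key observation is the classical formula for the supertrace of a derivation: $\str$ of the derivation extension of $A\in\mathrm{End}(V^*)$ equals $\det(I-A)$ evaluated appropriately — more precisely, for the operator $\mathrm{ad}$-type extension one has the generating identity $\sum_k (-1)^k \Tr(\wedge^k A) = \det(I - A)$ when $A$ acts as $\wedge^k$, but here $T$ acts as a derivation, which is the infinitesimal version. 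I would instead work directly: introduce the one-parameter operators $e^{x_i T_i}$, which act on $\wedge^* V^*$ as $\wedge^*(e^{x_i A_i})$ (the functorial, not derivation, action) since exponentiating a derivation gives an algebra automorphism. Then $\str\bigl(\wedge^* B\bigr)=\det(I-B)$ for any $B\in\mathrm{End}(V^*)$, by the standard fact that $\sum_k(-1)^k\Tr(\wedge^k B)=\det(I-B)$.

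Next I would exploit multilinearity. Consider $F(x_1,\dots,x_l):=\str\bigl(\wedge^*(e^{x_1 A_1})\circ\cdots\circ\wedge^*(e^{x_l A_l})\bigr)=\str\bigl(\wedge^*(e^{x_1A_1}\cdots e^{x_lA_l})\bigr)=\det\bigl(I-e^{x_1A_1}\cdots e^{x_lA_l}\bigr)$. Differentiating $F$ once in each variable $x_1,\dots,x_l$ and then setting all $x_i=0$ extracts exactly $\str(T_1\cdots T_l)$ up to a sign: indeed $\partial_{x_1}\cdots\partial_{x_l}\big|_{0}\bigl(\wedge^*(e^{x_1A_1})\cdots\wedge^*(e^{x_lA_l})\bigr)$ equals the derivation-extension product $T_1\cdots T_l$ acting on $\wedge^*V^*$ (each $\partial_{x_i}|_0$ of $\wedge^*(e^{x_iA_i})$ produces the derivation $T_i$, because the derivative at $0$ of the functorial action of a one-parameter group is the derivation generated by the infinitesimal endomorphism). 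So $\str(T_1\cdots T_l)=\partial_{x_1}\cdots\partial_{x_l}\big|_{0}\det\bigl(I-e^{x_1A_1}\cdots e^{x_lA_l}\bigr)$. Now expand $e^{x_iA_i}=I+x_iA_i+O(x_i^2)$, so $e^{x_1A_1}\cdots e^{x_lA_l}=I+\sum_i x_iA_i+(\text{higher order})$, whence $I-e^{x_1A_1}\cdots e^{x_lA_l}=-\sum_i x_iA_i+(\text{higher order})$. The mixed partial $\partial_{x_1}\cdots\partial_{x_l}|_0$ of $\det(-\sum_i x_iA_i+\cdots)$ picks out precisely the multilinear-in-$(x_1,\dots,x_l)$ part of $\det(-\sum_i x_iA_i)$, i.e.\ the coefficient of $x_1\cdots x_l$ therein, which is $(-1)^l$ times the coefficient of $x_1\cdots x_l$ in $\det(\sum_i x_iA_i)$.

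Two things then need checking. First, if $l<n$, the determinant $\det(\sum_i x_iA_i)$ is a homogeneous polynomial of degree $n$ in the entries of $\sum x_iA_i$, hence of total degree $n>l$ in $x_1,\dots,x_l$, so it has no monomial of multidegree $(1,1,\dots,1)$ (total degree $l$); therefore the mixed partial vanishes and $\str(T_1\cdots T_l)=0$. Also the higher-order corrections to $I-e^{x_1A_1}\cdots e^{x_lA_l}$ only contribute terms of total degree $\ge l+1$ or terms not divisible by all of $x_1,\dots,x_l$, so they drop out of the mixed partial. Second, I should pass back from $A_i\in\mathrm{End}(V^*)$ to $T_i\in\mathrm{End}(V)$: since the derivation extension of $T_i$ to $\wedge^*V$ is by construction what the problem calls $T_i$, and $\det$ is invariant under transpose, the coefficient of $x_1\cdots x_l$ in $\det(\sum x_iA_i)$ equals that in $\det(\sum x_iT_i)$. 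The symmetry statement is then immediate: $\det(\sum_i x_iT_i)$ does not depend on any ordering of the $T_i$, so neither does the extracted coefficient, hence neither does $\str(T_1\cdots T_l)$.

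The main obstacle I anticipate is the bookkeeping of the two different actions of $\mathrm{End}(V)$ on $\wedge^*V$ — the functorial action $B\mapsto\wedge^*B$ (a ring homomorphism on automorphisms) versus the derivation action $T\mapsto($derivation extension$)$ — and verifying cleanly that $\wedge^*(e^{xT})$ has derivative $T$ (as a derivation) at $x=0$, and that the product of such exponentials is the exponential of the sum plus higher-order terms in the relevant sense. Once that functoriality is pinned down, the identity $\str(\wedge^*B)=\det(I-B)$ and the multilinear extraction are routine. An alternative, more pedestrian route avoiding exponentials is to compute $\Tr(T_1\cdots T_l|_{\wedge^kV})$ directly on the monomial basis: a product of derivations, applied to $e^{j_1}\wedge\cdots\wedge e^{j_k}$, is a sum over ways of distributing the $l$ derivations among the $k$ wedge factors, and Leibniz bookkeeping plus the alternating sum over $k$ collapses everything to the determinantal coefficient; but the generating-function argument above is shorter and makes the order-independence transparent.
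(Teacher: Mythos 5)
Your argument is correct, and it is genuinely different from what the paper does: the paper offers no proof at all for this lemma, simply citing {\sc Lemma} 7.3.2 of Hsu's \emph{Stochastic Analysis on Manifolds} (where the result is established by the kind of direct Leibniz/basis-monomial bookkeeping you sketch as your ``pedestrian route''). Your generating-function proof is self-contained and clean: the identities $e^{xD_T}=\wedge^*(e^{xT})$ (exponential of a derivation is an algebra automorphism, hence the functorial extension of its degree-one part) and $\str(\wedge^*B)=\sum_k(-1)^k\Tr(\wedge^kB)=\det(I-B)$ are both standard, the mixed partial $\partial_{x_1}\cdots\partial_{x_l}\big|_{0}$ correctly extracts $\str(T_1\cdots T_l)$, and your degree count is right --- every entry of $I-e^{x_1A_1}\cdots e^{x_lA_l}$ vanishes at the origin, so every monomial of the $n\times n$ determinant has total degree at least $n$, which kills the case $l<n$ and, for $l=n$, leaves only the multilinear part of $\det(-\sum_ix_iA_i)=(-1)^n\det(\sum_ix_iA_i)$. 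The chief advantage of your route is that order-independence is transparent from the symmetry of $\det(\sum_ix_iT_i)$, whereas the combinatorial proof has to observe it after the fact. One small criticism: the opening digression about the ``negative transpose'' action on $V^*$ and the vague ``$\det(I-A)$ evaluated appropriately'' is a false start that you then abandon; since the lemma is purely about $T_i\in\mathrm{End}(V)$ extended as derivations of $\wedge^*V$, you can run the whole argument on $V$ itself and drop the $V$-versus-$V^*$ bookkeeping entirely.
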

\begin{proof} See {\sc Lemma} 7.3.2 in Hsu~\cite{Hsu}.  
\end{proof}

From this lemma we have
$$ \str\{m_{p,q}(t) (v_t)^i\} = 0 , \  \ \text{if}\  \ 2p + q + i < d - 1.$$
On the other hand, by \rf{multi} and \rf{estimate2}
$$\mathbb{E}_{t;x,x^*} |m_{p,q}(t) (v_t)^i| \leq Ct^{p+q/2+ i},\  \ \ \ x \in \partial M \times [0, h].$$
From \rf{integration1} and \rf{products} a typical term has the form
$$t^{-(d-1)/2}\me_{(t,u, \overline x)}\str\lbr m_{p,q}(t)(v_t)^i\rbr.$$
It algebraically vanishes when $2p+q+i< d-1$ and asymptotically vanishes when $2p+q+2i> d-1$. Hence the only terms we need to examine satisfy $2p+q+i\ge d-1$ and 
$2p+q+2i\le d-1$,  namely $i = 0$ and $2p+q = d-1$.  The number of such terms is $d/2$ if $d$ is even and $(d+1)/2$ if $d$ is odd.  If we can identify the the limits
$$\lim_{t\rightarrow0}\frac2{(2\pi)^{d/2}t^{(d-1)/2}}\int_0^\infty e^{-2u^2}\left[\me_{(t,u, \overline x)}\str\lbr m_{p,q}(t)\rbr\right]\, du  = C_{p, q}(\overline x), $$
then we will have
$$\chi(M) = \int_M e_M(x)\, dx + \int_{\partial M}e_{\partial M}(\overline x)\, d\overline x,$$
where 
$$e_{\partial M}(\overline x)= \sum_{2p+q = d-1}C_{p, q}(\overline x). $$

Since $m_0 = Q$ and the supertrace is invariant under permutations of the factors, we can drop the term involving the projection $P$  in the iteration formula \rf{iterate} 
because we can always move $P$ to be next to $Q$ without affecting the supertrace and $PQ = 0$. From now on we assume that $m_i$ has been redefined without the terms involving $P$. 
Thus after dropping inessential terms we are dealing with 
$$m(t) = \sum_{2p+q = d-1} \frac{(-1)^q}{2^p} \sum_{I\subset I_{p+q}}m_I(t),$$
where the sum is over all subsets of $I_{p+q} = \lbr 1, \cdots, p+q\rbr$ if size $p$, and 
\begin{align*}
m_I(t) =&\  \int_0^t d\eta_{p+q}\cdots\int_0^{\tau_{q+1}}d\eta_q
\int_0^{\tau_q}d\eta_{q-1}\cdots\int_0^{\tau_2}d\eta_1 \\
&\qquad \Omega(E_{\tau_{p+q}})\cdots \Omega(E_{\tau_{1+q}})H(E_{\tau_{q}})\cdots (E_{\tau_{1}}) Q,
\end{align*}
where $d\eta_i = d\tau_i$ if $i\in I$ and $d\eta_i = dl_{\tau_i}$ otherwise. 
Therefore it is enought to calculate the limit of 
$$t^{-(d-1)/2}\me_{(t, u, \overline x)}\left[\sum_{I\subset I_{p+q}}\str m_{\pi}(t)\right]$$
as $t\downarrow\infty$. 
Before doing so, we can make one more simplification. We note that as $t\downarrow 0$, the Brownian bridge shrink to the boundary point 
$\overline x$. If we replace the integrand in the integral by its value at $\overline x$,  the error is bounded by a  constant times 
$$\me_{(t, u, \overline x)}\int_0^t\cdots\int_0^t d(E_{\tau_i}, \overline x)d\tau_1\cdots d\tau_p\, dl_{p+1}\cdots dl_{\tau_{p+q}} \le C t^{(d-1)/2 + 1/2}.$$
This will create an error term that vanishes as $t\downarrow 0$.  After we substitute the integrand by its value at the limiting point $\overline x$ we are left with the sum of iterated integrals
$$\sum_{I\subset I_{p+q}} \int_0^t d\eta_{p+q}\cdots\int_0^{\tau_{q+1}}d\eta_q
\int_0^{\tau_q}d\eta_{q-1}\cdots\int_0^{\tau_2}d\eta_1 = \frac{ t^p l_t^q}{p!q!}.$$
Note that there are $(p+q)!/p!q!$ terms and all the iterated integrals are equal because they are the volume $t^pl_t^q/(p+q)!$ of the rectangular simplex with $p$ sides of length $t$ and $q$ 
sides of length $l_t$. 

It remains to evaluate the limit of  $t^{-q/2}\me_{(t, u, \overline x)}\left[l^q\right]$. 

\section{Moments of the boundary local time}  

We work in the semi-geodesic coordinates.  Denote the Brownian bridge from $(\sqrt{t}u,\overline{x})$ to $ (-\sqrt t u, \overline x)$ in 
time $t$ by $X^t$, whose normal component is $X^{1,t}$.  Scaling the time parameter by a factor of $t$ and let
$$Z^t_s = \frac{X^t_{st}}{\sqrt{t}}, \ 0\le s\le 1\quad\text{and}\quad l^t_1 = \frac{l_t}{\sqrt t}.$$
Then $l_1^t$ is the local time of the normal component $Z^{1,t}$ at $0$ up to time 1 and $Z^{1,t}_0= u$.   It is easy to write down the stochastic differential equation of $N^t = Z^{1,t}$ under the probability $\me_{(t, u, \overline x)}$.  We have
\begin{equation} \label{sde}
d N^t_s = d W_s + \sqrt{t}b(\sqrt{t}Z^t_s)ds + \sqrt{t}c(t(1-s),\sqrt{t}Z^t_s)ds, 
\end{equation}
where $W$ is a standard one-dimensional Brownian motion, $b^1$ is defined as in \rf{driftb} and 
$$c(s,x) = g^{1j}(z) \frac{\partial}{\partial x^j} \ln q(t,x,(u,\overline{x})).$$
From the explicit expression for $b^1$ it is easy to see that $\lim_{t\downarrow0}\sqrt t b(\sqrt t z) = 0$.  On the other hand,  we have (see {\sc Theorem} 2.5 of Norris~\cite{norris})
$$\lim_{t\downarrow0}\sqrt tc(t(1-s), \sqrt t z)=-2us+ \frac{ z^1+ u}{1-s}.$$
It follows that the normal component $Z^{1, t}$ converges to the process determined by the stochastic differential equation
$$dN_s =dW_s + \frac{N_s +u}{1-s}\, ds, \qquad N_0 = u.$$
This is the equation of a Brownian bridge from $u$ to $-u$.  Thus we have shown that
$$\lim_{t\rightarrow0}\me_{(t, u, \overline x)}\left[\left(\frac{l_t}{\sqrt t}\right)^q\right] = \me_{1, u, - u}\left[ l_1^q\right],$$
where $\me_{1, u, -u}$ is the expectation with respect to a Brownian bridge from $u$ to $-u$ in time 1 and $l_1$ is its local time at $0$ up to time 1.  The expectation can be computed explicitly.
According to {\sc Lemma} 4.2 of Shigekawa et al.~\cite{ShigekawaUW}, 
\begin{equation} \label{expectation2}
\frac1{\sqrt{2\pi}}\int_0^{\infty}e^{-2u^2} \mathbb{E}_{1, u, -u}\left[l_1^q\right]\, du = \frac{q!}{2^{q/2 + 2}\Gamma (q/2+1)}. 
\end{equation}

\section{Geometric identification of the boundary integrand}

Combining the calculations of the previous paragraphs, we conclude that the boundary integrand of the Gauss-Bonnet-Chern formula is given by
$$e_{\partial M} = \frac1{2(4\pi)^{(d-1)/2}}\sum_{2p+q = d-1}\frac{(-1)^q}{p! \varGamma(q/2+1)}\str(\Omega^pH^qQ).$$
Recall that $Q$ is the tangential projection at the boundary and $\str$ is the supertrace on $\wedge^*M$. Using the Gauss-Codazzi equation we can write 
the curvature tensor $\Omega$ of the manifold restricted to the boundary in terms of the curvature tensor of the boundary $\overline\Omega$ and the second fundamental form $H$.
This allows us to rewrite the boundary integrand $e_{\partial M}$ in terms of the geometry of the boundary $\partial M$ as an embedded submanifold of $M$.  More precisely we have
$\str [(D\Omega)^p (H)^q Q] = \overline{\str} [(D \Omega)^p (H)^q]$, 
where $\overline{\str}$ denotes the supertrace on $\bigwedge^*\partial M$. It follows that
$$e_{\partial M} = \frac1{2(4\pi)^{(d-1)/2}}\sum_{p = 0}^{[(d-1)/2]}\frac{(-1)^{q}}{p!\varGamma\left(q/2+1\right)}\overline{\str}(\overline\Omega^pH^q).$$
[$q = d-1-2p$]
\bigskip

{\sc Acknlowedgment.} The second author would like to thank Professor Denis Bell of the University of North Florida for his interest in the project and helpful discussions 
during the early stage of the investigation.


\begin{thebibliography}{99}

\bibitem{BerlineGV} N. Berline, E. Getzler, and M. Vergne,  {\it Heat Kernels and Dirac Operators}, Springer-Verlag (1992). 

\bibitem{Bismut} J. M. Bismut,  {\it Index theorem and the Heat Equation}, Proceedings of the International Congress of Mathematicians, vol. 2,  Amer. Math. Soc. (1987).
	
\bibitem{Chern} S. S. Chern,  A simple intrinsic proof of the Gauss-Bonnet formula for closed Riemannian manifolds, {\it Annals of Mathematics}, {\bf 45}, 741-752 (1944). 

\bibitem{Duff} G. F. D. Duff,  Differential forms in manifolds with boundary, {\it Ann. Math.}, {\bf 56}, no. 1, 115-127.

\bibitem{Getzler} E. Getzler,  A short proof of the Atiyha-Singer index theorem, {\it Topology}, {\bf 25}, 111-117 (1986). 

\bibitem{Gilkey} P. B. Gilkey, {\it lnvariance Theory, the Heat Equation, and the Atiyah-Singer Index Theorem}, 2nd edition,  CRC Press (1994).
	
\bibitem{Gilkey1} P. B. Gilkey,  The boundary integrand in the formula for the signature and Euler characteristic of a Riemannian manifold with boundary,  
{\it Advances in Mathematics},  {\bf 15},  334-360 (1975). 

\bibitem{Hsu5} E. P. Hsu, Short-time asymptotics of the heat kernel on a concave boundary,  {\it SIAM J. Math. Anal.}, {\bf 20},  no. 5,  1109-1127 (1989).
	
\bibitem{Hsu} E. P. Hsu,  {\it Stochastic Analysis on Manifolds},  Graduate Studies in Mathematics,{\bf 38},  American Mathematical Society (2002).
	
\bibitem{Hsu1} E. P. Hsu,  On the principle of not feeling the boundary,  {\it J.  London Math. Soc. (2)},  {\bf 51}, 373-382 (1995).
	
\bibitem{Hsu2} E. P. Hsu,  On the $\Theta$-function of a Riemannian manifold with boundary,  {\it Transaction of American Mathematical Society},   {\bf 333}, no.2 (1992). 
	
\bibitem{Hsu3} E. P. Hsu,  Multiplicative functional for the heat equation on manifolds with boundary,  {\it Michigan Math.  J.},  {\bf 50}, issue 2 (2002).
	
\bibitem{Hsu4} E. P. Hsu,  Stochastic Local Gauss-Bonnet-Chern Theorem, {\it Journal of Theoretical Probability}, {\bf 10},  819–834 (1997).

\bibitem{IkedaW} N. Ikeda and S. and Watanabe,  {\it Stochastic Differential Equations and Diffusion Processes},  2nd edition, North-Holland and Kodansha (1984).

\bibitem{McKeanS}  H. P. McKean Jr. and I. Singer, Curvature and eigenvalues of the Laplacian, {\it Journal of Differential Geometry},  {\bf 1}, 43-69 (1967). 
	
\bibitem{norris} J. R. Norris,  {\it Path integral formulae for heat kernels and their derivatives},  {\it Probab. Theory Relat.  Fields},  {\bf 94}, 525-541 (1993).

\bibitem{Ouyang} C. Ouyang, Multiplicative functionals for the heat equation on manifolds with boundary,  in Stochastic Analysis and Related Topics - A Festschrift in Honor of Rodrigo Banuelos, 
{\it Progress in Probability}, Birkhauser, {\bf 72},  67-83 (2017).

\bibitem{Patodi} V. P. Patodi, Curvature and the eigenvalues of the Laplacian, {\it Journal of Differential Geometry},  {\bf 5}, 251-283 (1971). 
	
\bibitem{RevuzY} D. Revuz and M. Yor,  {\it Continuous Martingales and Brownian Motion},  2nd edition, Springer-Verlag, New York (2010).

\bibitem{ShigekawaUW} I. Shigekawa, N. Ueki and S. Watanabe,  A probabilistic proof of the Gauss-Bonnet-Chern theorem for manifolds with boundary, {\it Osaka J. Math}, {\bf 26}, no. 4,  (1989).
	
\bibitem{witten}  E. Witten, Supersymmetry and Morse theory,  {\it Journal of Differential Geometry}, {\bf 17},  661–692,(1982).
	
\bibitem{Yu} Y. Yu, {\it The Index Theorem and the Heat Equation Method},  World Scientific (2001). 
\end{thebibliography}
\end{document}